\title[weak solutions of conservation laws and energy/entropy conservation]{A note on weak solutions of conservation laws and energy/entropy conservation}
\author{Piotr Gwiazda$^1$ \and Martin Mich\'alek$^2$ \and Agnieszka \'{S}wierczewska-Gwiazda$^3$}
\begin{document}
    \maketitle
    
    \let\thefootnote\relax\footnote{{\hspace{-0.4cm}1.  Institute of  Mechanics, Polish Academy of Sciences, \'Sniadeckich 8, 00-656 Warszawa, Poland, email:  pgwiazda@mimuw.edu.pl\\
2. Institute of Mathematics of the Academy of Sciences of the Czech Republic, \v{Z}itn\'{a} 25, CZ-115 67 Praha~1, Czech Republic, email:  michalek@math.cas.cz\\
3. Institute of Applied Mathematics and Mechanics, University of Warsaw, Banacha 2, 02-097 Warszawa, Poland, phone: {\it +48 22 5544115}, email: aswiercz@mimuw.edu.pl\\
}}

\begin{abstract}
    A common feature of systems of conservation laws of continuum physics is that they are endowed with natural companion laws which are in such case most often related to the second law of thermodynamics.     This observation easily generalizes to any symmetrizable system of conservation laws. They are endowed with nontrivial companion conservation laws, which are immediately satisfied by classical solutions. Not surprisingly, weak solutions may fail to satisfy companion laws, which are then often relaxed from equality to inequality and overtake a role of a physical admissibility condition for weak solutions. 
    
    We want to answer the question what is a critical regularity of  weak solutions to a general system of conservation laws to satisfy an associated  companion law as an equality. An archetypal 
    example of such result was derived for the incompressible Euler system by Constantin et al. (\cite{ConstETiti}) in the context of the seminal Onsager's conjecture. 
    
    This general result can serve as a simple criterion to numerous systems of mathematical physics to prescribe the regularity of solutions needed for an appropriate companion law to be satisfied. 
    
    \noindent{\bf Keywords}: energy conservation, first order hyperbolic system, Onsager's conjecture

\end{abstract}

\section{Introduction}

The passing decade has been to a significant extend directed to solving the famous conjecture of Onsager saying that solutions to incompressible Euler system conserve total kinetic energy as long as they are  H\"older continuous  with a 
H\"older exponent $\alpha>1/3$. Otherwise they may dissipate the energy. 

The ideas used to prove the celebrated Nash-Kuiper theorem appeared to have wide applicability in the context of fluid mechanics, and incompressible Euler system in particular. Interestingly, the construction of weak solutions via appropriate refinement of the method of convex integration allowed to generate solutions with a regularity as exactly prescribed by Onsager that do not coserve the energy. 
 We shall summarize in a sequel the recent achievements in this direction, however our main interest in the current paper is aimed at an analogue and generalization of the first part of the Onsager's statement. 
 The positive direction of this claim was fully solved by Constantin et al. already in the early nineties, cf.~\cite{ConstETiti}, see also~\cite{cheskidov, DuRo2000, eying}.
 A sufficient regularity for the energy to be conserved has been established  for a variety of models, including the incompressible inhomogeneous Euler system and the compressible Euler in \cite{Feireisletal}, the incompressible inhomogeneous Navier-Stokes system in \cite{LeSh2016}, compressible Navier-Stokes in~\cite{Yu}
and equations of magneto-hydrodynamics in \cite{Cafetal}. 

The above list gives a flavor of how broad is the class of systems for which one can specify the regularity of weak solutions  which provides the energy to be conserved. This motivates us, instead of developing tools for another dozens of systems, to look at general systems of conservation laws. Apparently one can prescribe the condition for weak solutions providing that in addition to a conservation law they will satisfy a companion conservation law. To make the statement more precise, let us consider
%
%
%
a conservation law, not necessarily hyperbolic, in a general form
\begin{equation}
  \label{eq:balance}
  \diver_{X}(G(U(X))=0 \quad \quad \mbox{for } X\in \CX
\end{equation}
for an unknown (vector) function  $U=U(X)\colon \CX\to \CO $  and a given matrix field $G\colon \CO \to \BM^{n\times (k+1)}$.  Let us assume that $\CO$ and $\CX$ are open sets, $\CX \subseteq \BR^{k+1}$ or $\CX \subseteq \BR\times\BT^{k}$ and
$\CO\subseteq \BR^n$, where $\BT^{k}$ denotes the  flat torus of dimension $k$ (imposing the periodic boundary conditions). We denote  $X = (x_0,x_1,\dots, x_k)^T$ the standard coordinates on $\BR^{k+1}$ or $\BR\times \BT^k$ and we consider on $\CO$ the coordinates $Y=(y_1,\dots,y_n)^T$ with respect to the canonical basis. 
For a matrix field $M=(M_{i,j})_{i=1,\dots, n,\ j=0,\dots, k}$, $M_{i,j}\colon \BR^n \to \BR$, we 
denote $M_{j}$ the $j$--th  column vector. 
Moreover, we use the standard definition
\begin{equation*}
  \diver_{X} M(X) = 
  \sum_{j=0}^k \partial_{x_j}M_{j}(X).
\end{equation*}
We denote by $D_X$ (respectively $D_Y$, $D_U$) the differential 
($D_X=(\partial_{x_0},\dots,\partial_{x_k})$) with respect to variables $X$ (respectively $Y$, $U$). 

Following the notation in~\cite{dafermos} we shall say that a smooth function $Q\colon \CO \to \BR^{s\times (k+1)}$ is a {\it companion} of $G$ if there exists a smooth function ${\mathcal B}\colon \CO \to \BM^{s\times n}$ such that 
\begin{equation}           
 \label{eq:algebraic_cons}
  D_{U}Q_j(U)={\mathcal B}(U)  D_{U}G_j(U)\quad \mbox{for all $U\in \CO$, $j\in\{0,\dots,k\}$}.
\end{equation}
Observe that for any classical solution $U$ of \eqref{eq:balance}, we obtain 
\begin{equation}
    \label{eq:companion}
  \diver_X (Q(U(X)))=0 \quad \mbox{for $X\in \CX$}
\end{equation}
where by a {\it classical solution} we mean a Lipschitz continuous vector field $U$ satisfying \eqref{eq:balance} for almost all $X\in\CX$. 
Identity \eqref{eq:companion} is called a {\it companion law} associated to $G$ (see e.g. \cite{dafermos}). In many applications, which we partially recall in  Section \ref{sec:appl}, some relevant companion laws are {\it conservation of energy} or {\it conservation of entropy}.  
Before we discuss the relations between weak solutions and companion laws, let us remark that it was observed by Godunov \cite{godunov} that  systems of conservation laws are symmetrizable if and only if they are endowed with nontrivial companion laws.

We consider the standard definition of weak solutions to a conservation law 
\begin{dfn}
    We call the function $U\colon \CX \to \CO$ a weak solution to \eqref{eq:balance} if
    $G(U)$ is locally integrable in $\CX$ and the equality
\begin{equation}    
    \label{eq:weak_sol}
  \int_{\CX} G(U(X))\colon D_X \psi(X) \de{X}=
  0
\end{equation}
holds for all smooth test functions $\psi\colon \CX \to \BR^n$ with a compact support in $\CX$.
\end{dfn}
Analogously, we can define weak solutions to \eqref{eq:companion}, however weak solutions of \eqref{eq:balance} may not necessarily  be weak solutions  also to \eqref{eq:companion}. 
The main question we deal with in this paper reads as follows:
\emph{What are  sufficient conditions for a weak solution of \eqref{eq:balance} to satisfy also \eqref{eq:companion}?} 

Let us comment in more detail results related to the question of energy conservation for weak solutions of some conservation laws. 
Both parts of Onsager's conjecture for the incompressible inviscid Euler system have been resolved. Due to recent results of Isett \cite{isett}  and Buckmaster et al.\,\cite{delell_rec} we know  there exist solutions of the incompressible Euler equations of class $C([0,T];C^{1\slash 3}(\BT^3))$ which do not satisfy the energy equality. These results were preceded with a series of papers showing firstly existence
of bounded (\cite{DLS09}), later continuous (\cite{DLS13}) and H\"older continuous (\cite{DLS14}) solutions with $\alpha=1/10$. The further results aimed to increasing the H\"older exponent, see \cite{Buc15, BSLISJ15, BDKS16,  Ise13}.

In the context of our studies, the second part of Onsager's conjecture is more relevant. Constantin et al. \cite{ConstETiti} showed the conservation of the global kinetic energy  if the velocity field $u$ is of the class $L^3(0,T;B^{\alpha}_{3,\infty}(\BT^3))\cap C([0,T];L^2(\BT^3))$ whenever $\alpha>\frac 13$, see also \cite{eying}. Here 
$B^{\alpha}_{3,\infty}$ stands for a Besov space (definition is recalled in Section~\ref{sec:besov}).  For the same system it was observed by Cheskidov et al.\,in \cite{cheskidov} that it is sufficient 
for $u$ to belong to a larger space $L^3(0,T;B^{1\slash 3}_{3,q}(\BT^3))$ where $q\in(1,\infty)$. We refer the reader to \cite{shvydkoy_energy} and \cite{shvydkoy} 
for more refinements in the case of the incompressible Euler system.
For the incompressible inviscid equations of magneto--hydrodynamics, a result comparable to \cite{ConstETiti} was proved by Caflisch et al.\,\cite{Cafetal}, see also \cite{KangLee}.

The standard technique developed in \cite{ConstETiti} is based on the convolution of the Euler system with a standard family of mollifiers. The crucial part of the proof is 
then to estimate an appropriate nonlinear commutator. The most of the mentioned results have been derived for systems with bilinear non--linearity.

Recently, similar results for the compressible Euler system were presented by Feireisl et al. in \cite{Feireisletal}. A sufficient condition for the energy conservation is that the solution belongs to $B^{\alpha}_{3, \infty}((0,T)\times \BT^3)$ with $\alpha>1\slash 3$. Up to our knowledge, this was the first result treating nonlinearity which is not in a multilinear form.  
We extend this approach to a general class of conservation laws of the form~\eqref{eq:balance}. Let us mention that we are not aware of any reference where the problem would be treated in such generality. We believe that this general scenario might be of interest.
Moreover, at least the application on the equations of polyconvex elastodynamics (Subsection \ref{subsec:polyconv}) is an original contribution of this paper. 

Let us present the main results of the paper. For the notation, we refer the reader to Section \ref{sec:besov}. 
\begin{thm}
    \label{thm:bes}
    Let $U\in B^{\alpha}_{3,\infty}(\CX; \CO)$ be a weak solution of \eqref{eq:balance} with $\alpha>\frac{1}{3}$. 
    Assume that $G \in C^2(\CO;\BM^{n\times (k+1)})$ is endowed with a companion law with flux 
    $Q\in C(\CO;\BM^{1\times (k+1)})$ for which there exists  ${\mathcal B}\in C^1(\CO;\BM^{1\times n})$ related through  identity \eqref{eq:algebraic_cons}
    and all the following conditions hold
    \begin{equation}
        \label{eq:assumpt_convex}
         \left.
         \begin{aligned} 
         \mbox{$\CO$ is convex},
         \\
         {\mathcal B}\in W^{1,\infty}(\CO;\BM^{1\times n}),
         \\
         |Q(V)|\leq C(1+|V|^3)\   \mbox{for all $V\in\CO$},
         \\
         \sup_{i,j \in{1,\dots,d}}\|\partial_{U_i}\partial  _{U_j} G(U)\|_{C(\CO;\,\BM^{n\times (k+1)})}<+\infty.
         \end{aligned}
         \right\}
    \end{equation}    
    Then $U$ is a weak solution of the companion law \eqref{eq:companion}  with the flux $Q$.
\end{thm}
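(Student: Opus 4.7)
The plan is to mollify $U$ and then follow the commutator strategy of \cite{ConstETiti}, adapted to the abstract companion-law framework. Let $\{\eta_\varepsilon\}_{\varepsilon>0}$ be a standard family of nonnegative smooth mollifiers on $\BR^{k+1}$ (or $\BR\times\BT^k$), and write $f^\varepsilon := f*\eta_\varepsilon$. Fix a scalar test function $\phi\in C_c^\infty(\CX)$ and take $\varepsilon$ small enough that $\mathrm{supp}\,\phi$ lies at distance greater than $\varepsilon$ from $\partial\CX$. The convexity of $\CO$ ensures that $U^\varepsilon(X)\in\CO$ wherever the mollification is defined, so $G(U^\varepsilon)$, $Q(U^\varepsilon)$ and $\mathcal{B}(U^\varepsilon)$ are meaningful on $\mathrm{supp}\,\phi$.

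The goal is to prove $\int_\CX Q(U):D_X\phi\,dX = 0$. Split
\begin{equation*}
\int_\CX Q(U):D_X\phi\,dX = \int_\CX Q(U^\varepsilon):D_X\phi\,dX + \int_\CX [Q(U)-Q(U^\varepsilon)]:D_X\phi\,dX;
\end{equation*}
the last summand vanishes as $\varepsilon\to 0$ by continuity of $Q$, the cubic growth bound in \eqref{eq:assumpt_convex}, and $U\in L^3_{\mathrm{loc}}$ (from the embedding $B^\alpha_{3,\infty}\hookrightarrow L^3_{\mathrm{loc}}$). For the first integral I integrate by parts, apply the chain rule and the algebraic identity \eqref{eq:algebraic_cons} to write
\begin{equation*}
\int_\CX Q(U^\varepsilon):D_X\phi\,dX = -\int_\CX \phi\,\diver_X Q(U^\varepsilon)\,dX = -\int_\CX \phi\,\mathcal{B}(U^\varepsilon)\,\diver_X G(U^\varepsilon)\,dX.
\end{equation*}
Since the mollified weak equation gives $\diver_X(G(U))^\varepsilon = 0$ on $\mathrm{supp}\,\phi$, I may replace $\diver_X G(U^\varepsilon)$ by $\diver_X[G(U^\varepsilon)-(G(U))^\varepsilon]$ and integrate by parts once more, shifting the divergence back onto $\phi\,\mathcal{B}(U^\varepsilon)$.

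This produces two terms, involving respectively $\mathcal{B}(U^\varepsilon)\,D_X\phi$ and $\phi\,D_U\mathcal{B}(U^\varepsilon)\,D_X U^\varepsilon$, each contracted with the commutator $\mathcal{R}^\varepsilon := G(U^\varepsilon)-(G(U))^\varepsilon$. The heart of the argument is the estimate
\begin{equation*}
\|\mathcal{R}^\varepsilon\|_{L^{3/2}(\mathrm{supp}\,\phi)} \leq C\varepsilon^{2\alpha}.
\end{equation*}
I would obtain it via a second-order Taylor expansion of $G$ around $U^\varepsilon(X)$: the moment identity $\int \eta_\varepsilon(X-Y)(U(Y)-U^\varepsilon(X))\,dY = 0$ and the uniform bound on $D^2_U G$ give the pointwise estimate
\begin{equation*}
|\mathcal{R}^\varepsilon(X)| \;\lesssim\; \int \eta_\varepsilon(X-Y)\,|U(Y)-U(X)|^2\,dY \;+\; |U^\varepsilon(X)-U(X)|^2,
\end{equation*}
and both summands are controlled in $L^{3/2}$ by $C\varepsilon^{2\alpha}$ through the Besov seminorm (Minkowski's inequality in the first summand). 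Combined with $\mathcal{B}\in W^{1,\infty}$ and the standard bound $\|D_X U^\varepsilon\|_{L^3(\mathrm{supp}\,\phi)}\leq C\varepsilon^{\alpha-1}$, the second term is $O(\varepsilon^{3\alpha-1})$, which vanishes precisely when $\alpha>1/3$; the first term is $O(\varepsilon^{2\alpha})$, so both tend to zero.

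The main obstacle is the commutator estimate on $\mathcal{R}^\varepsilon$: unlike in the bilinear setting of \cite{ConstETiti,Cafetal}, no direct algebraic Constantin--E--Titi identity is available, and the Taylor-expansion route above is exactly what pins down each hypothesis in \eqref{eq:assumpt_convex}---convexity of $\CO$ so that $U^\varepsilon\in\CO$ and the expansion is meaningful, the uniform bound on $D^2_U G$ to close the quadratic remainder, the Lipschitz character of $\mathcal{B}$ to absorb the $D_X U^\varepsilon$ factor in the worst term, and the cubic growth of $Q$ to pass to the limit in $Q(U^\varepsilon)\to Q(U)$.
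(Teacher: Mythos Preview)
Your proposal is correct and follows essentially the same route as the paper: mollify, multiply by $\mathcal{B}(U^\varepsilon)$, use the algebraic identity \eqref{eq:algebraic_cons} to produce $Q(U^\varepsilon)$, integrate by parts so that the commutator $G(U^\varepsilon)-(G(U))^\varepsilon$ appears, and close via a second-order Taylor expansion together with the Besov estimates $\|D_X U^\varepsilon\|_{L^3}\lesssim\varepsilon^{\alpha-1}$ and $\|\mathcal{R}^\varepsilon\|_{L^{3/2}}\lesssim\varepsilon^{2\alpha}$. The only cosmetic difference is that you expand $G(U(Y))$ around $U^\varepsilon(X)$ and kill the first-order term with the moment identity $\int\eta_\varepsilon(X-Y)(U(Y)-U^\varepsilon(X))\,dY=0$, whereas the paper (Lemma~\ref{lem:nonlin_commut}) expands twice around $U(X)$ and subtracts; both yield the same quadratic remainder bound, and your passage $Q(U^\varepsilon)\to Q(U)$ in $L^1_{\mathrm{loc}}$ via the cubic growth of $Q$ is exactly the Vitali argument the paper invokes.
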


\begin{rem}
    \begin{itemize}
      \item We consider only a special case when the companion law is a scalar equation. If $Q\colon \CO \to \BM^{s\times (k+1)}$ and $s>1$, we can apply Theorem \ref{thm:bes} to each row of~\eqref{eq:companion}. 
      \item  The growth condition of $Q$ can be relaxed whenever $B^{\alpha}_{3,\infty}$ is embedded to an appropriate Lebesgue space.
      \item  Under suitable assumptions, one can extend the theory on non--homogeneous fluxes $G=G(X,U)$ and equation \eqref{eq:balance} with non--zero right--hand side $h=h(X,U)$.
      
        \item Due to the definition of weak solutions, it is enough to consider the integrability and regularity of $U$ only locally in $\CX$.
    \end{itemize}
\end{rem}

 Due to the assumption on the convexity of $\CO$, Theorem \ref{thm:bes} could be straightforwardly deduced from \cite{Feireisletal};
       however, for the reader's convenience, we present the proof in Section \ref{sec:main}.        
       It is worth noting that the convexity of $\CO$ might not be natural for all applications (this is e.g.\,the case of the polyconvex elasticity, see Section \ref{sec:appl}.
      To this purpose, we present a theorem dealing with the case of non--convex $\CO$.

\begin{thm}
    \label{thm:non_convex}
       Let the assumptions of  Theorem \ref{thm:bes} be satisfied, but instead of \eqref{eq:assumpt_convex}
       we assume that
       \begin{equation}
        \label{eq:assumpt_compact}
           \mbox{the essential range of $U$ is compact in $\CO$.}
       \end{equation}
       Then $U$ is a weak solution of the companion law \eqref{eq:companion} with the flux $Q$.
\end{thm}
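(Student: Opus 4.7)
The plan is to reduce the statement to (the proof of) Theorem~\ref{thm:bes} on the convex domain $\BR^n$, by smoothly extending $G$, $Q$, $\mathcal B$ to $\BR^n$ via a cutoff around the compact essential range of $U$. We accept that the companion identity \eqref{eq:algebraic_cons} is lost outside a small neighborhood of that range, and we control the resulting defect via the Besov regularity; the defect will vanish precisely at the same exponent $\alpha>1/3$.

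Denote by $K\subset\CO$ the essential range of $U$, which is compact by assumption, so that $U(X)\in K$ for a.e.\ $X\in\CX$. Write $K_\rho:=K+\overline{B_\rho(0)}$, pick $\delta>0$ with $K_{2\delta}\subset\CO$, and choose $\chi\in C_c^\infty(\CO;[0,1])$ equal to $1$ on $K_\delta$ with $\mathrm{supp}\,\chi\subset K_{2\delta}$. Extending by zero to $\BR^n$, set $\tilde G:=\chi G$, $\tilde Q:=\chi Q$, $\tilde{\mathcal B}:=\chi\mathcal B$. Thanks to the compactness of $K_{2\delta}\subset\CO$ together with $G\in C^2$, $Q\in C$ and $\mathcal B\in C^1$, the extensions automatically meet all the bounds in \eqref{eq:assumpt_convex} on the convex set $\BR^n$; moreover $\tilde G(U)=G(U)$ and $\tilde Q(U)=Q(U)$ a.e.\ on $\CX$, and the companion identity $D_U\tilde Q_j=\tilde{\mathcal B}D_U\tilde G_j$ is preserved on $K_\delta$, though in general fails on $\BR^n\setminus K_\delta$.

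Mollify $U$ in $X$ by $U_\epsilon:=\eta_\epsilon*U$ and split $\CX$ into the ``good set'' $\CX\setminus\Sigma_\epsilon$ and the ``bad set'' $\Sigma_\epsilon:=\{X\in\CX:U_\epsilon(X)\notin K_\delta\}$. On $\CX\setminus\Sigma_\epsilon$ the chain rule and the companion identity yield
\[
\diver_X\tilde Q(U_\epsilon)=\tilde{\mathcal B}(U_\epsilon)\,\diver_X\bigl[\tilde G(U_\epsilon)-(\tilde G(U))_\epsilon\bigr],
\]
because $\diver_X(\tilde G(U))_\epsilon=(\diver_XG(U))_\epsilon=0$ from the weak equation. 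This reduces the good-set contribution to the Constantin--E--Titi / Feireisl-et-al.\ commutator estimate for $\tilde G(U_\epsilon)-(\tilde G(U))_\epsilon$, which vanishes as $\epsilon\to 0$ exactly by the proof of Theorem~\ref{thm:bes} applied to the extended data on the convex set $\BR^n$.

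The new ingredient is the bad-set estimate. Since $U\in K$ almost everywhere, on $\Sigma_\epsilon$ we have $|U_\epsilon-U|\ge\delta$, whence the standard Besov-mollification bounds give $|\Sigma_\epsilon|\le C\delta^{-3}\epsilon^{3\alpha}$ together with $\|D_XU_\epsilon\|_{L^3}\le C\epsilon^{\alpha-1}\|U\|_{B^\alpha_{3,\infty}}$. The defect of the companion identity on $\Sigma_\epsilon$ is pointwise dominated by $C|D_XU_\epsilon|$, so by H\"older its contribution to $\int_\CX\psi\,\diver_X\tilde Q(U_\epsilon)\,\de{X}$ is at most $C|\Sigma_\epsilon|^{2/3}\epsilon^{\alpha-1}\le C\epsilon^{3\alpha-1}\to 0$, vanishing precisely because $\alpha>1/3$. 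Combining with the good-set limit and the $L^3_{\mathrm{loc}}$-continuity of $\tilde Q$ yields the weak form of \eqref{eq:companion}. The main obstacle is exactly this bad-set estimate, where the failure of the companion identity outside $K_\delta$ must be absorbed against the only borderline-integrable $D_XU_\epsilon$; pleasingly, the resulting threshold is again the Onsager exponent $1/3$.
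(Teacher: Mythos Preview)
Your proposal is correct and follows essentially the same route as the paper: extend $G$, $\mathcal B$, $Q$ by a cutoff to compactly supported functions on $\BR^n$, split into a good set where $[U]_\eps$ stays in $K_\delta$ (so the companion identity survives and the argument of Theorem~\ref{thm:bes} applies via the commutator estimate) and a bad set where $|U-[U]_\eps|\ge\delta$, then control the bad-set contribution by Chebyshev and H\"older to get the $\eps^{3\alpha-1}$ rate. The only cosmetic difference is that the paper organizes the computation starting from $\int\psi\,\tilde{\mathcal B}([U]_\eps)\,\diver_X\tilde G([U]_\eps)$ and separately shows it converges to $-\int Q(U)(D_X\psi)^T$, whereas you start directly from $\int\psi\,\diver_X\tilde Q(U_\eps)$; the estimates and the threshold $\alpha>1/3$ are identical.
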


Apparently, the conclusions of the previous theorems are reasonably weaker in comparison with some known results for particular conservation laws. As an example, the result of Constantin et al. in \cite{ConstETiti} does not need the Besov--type regularity with respect to time. Having more knowledge about the nonlinear part of $G$, we may be able to  relax the class of solutions in Theorem \ref{thm:bes}, what is discussed in Section \ref{sec:appl}.

Finally, we observe that in case we consider hyperbolic systems, the opposite direction of the Onsager's hypothesis  is almost trivial. This is of course completely different situation than the case of incompressible Euler system, which is not a hyperbolic conservation law and the construction of solutions dissipating the energy was a challenge. 
It is well known, cf.~\cite[Chapter 1]{dafermos} among others, that 
 shock solutions dissipate energy.  Following Dafermos again, we note that 
 crucial  properties of  local behavior of shocks
may be investigated, without loss of generality, within the framework of systems
in one-space dimension. Thus the essence can be already seen even on a simple example of the Burger's equation $u_t+(u^2/2)_x=0$. Classical solutions also satisfy $(u^2/2)_t+(u^3/3)_x=0$, which can be considered as a companion law.  The shock solutions to the equation in the first form satsify Rankine-Hugoniot condition $s(u_{l}-u{r})=(u^2_l-u^2_r)/2$, thus the speed of the shock is $
 s=(u_l+u_r)/2$, where $u_l=\lim_{y\to x(t)^-}u(y,t)$ and $u_r$ is defined correspondingly. 
 Considering the second equation one gets $s=2(u_l^2+u_lu_r+u_r^2)/3(u_l+u_r)$, which in an obvious way is different. More generally if we multiply~\eqref{eq:balance} with the function ${\mathcal B}$ one easily concludes that to provide that Rankine-Hugoniot conditions to be satisfied for the companion law,  we end up with a trivial companion law, namely ${\mathcal B}\equiv const$.
 
  Thus, knowing the regularity of shock solutions,  as was shown in \cite[Prop. 2.1]{Feireisletal}
 $$(BV\cap L^\infty)(\Omega)\subset B_{\infty}^{\frac{1}{q}}(\Omega)$$ for every $q\in[1,+\infty]$,
 we observe that  our assumptions are sharp. 
 

Let us briefly mention the outline of the rest of the paper. In Section \ref{sec:besov}, we introduce the notation. Section \ref{sec:main} contains proofs of the main propositions. 
Section \ref{sec:appl} is devoted to some relaxation of the conditions in Theorem \ref{thm:bes} and applications of the main theorems are also presented. 

\section{Notation and auxiliary estimates}
 \label{sec:besov}
 
We will briefly present some properties of the Besov spaces $B^{\alpha}_{q,\infty}$.
Let $\CX$ be as above, $\alpha\in(0,1)$ and $q\in[1,\infty)$. 
We denote by $B^{\alpha}_{q,\infty}(\CX)$ the Besov space which is defined as follows
\begin{equation*}
    B^{\alpha}_{q,\infty}(\CX)
    =   
    \left\{
            U\in L^q(\CX) \colon \quad
            |U|_{B^{\alpha}_{q, \infty}(\CX)} <\infty
    \right\}
\end{equation*}
with
\begin{align*}
    |U|_{B^{\alpha}_{q,\infty}(\CX)}
    =
    \sup_{\xi\in \BR^k}\frac{\|U(\cdot)-U(\cdot-\xi)\|_{L^q(\CX\cap (\CX+\xi))}}{|\xi|^{\alpha}}.
\end{align*}
On $B^{\alpha}_{q,\infty}(\CX)$ we consider the standard norm
\begin{equation*}
    \|U\|^q_{B^{\alpha}_{p,\infty}(\CX)}
    =
    \|U\|^q_{L^q(\CX)}+|U|^q_{B^{\alpha}_{q,\infty}(\CX)}.
\end{equation*}
Assume that a non--negative function $\eta_1 \in C^{\infty}(\BR^k)$ has a compact support in $B(0,1)$ and $\int_{\BR^k}\eta_1(X)\de{X}=1$. For $\eps>0$ we denote $\eta_{\eps}(X) = \frac{1}{\eps^k}\eta_1(\frac{X}{\eps})$ and
\begin{equation*}
    [f]_{\eps}(X)=f\ast \eta_{\eps}(X)
\end{equation*}
which is defined at least in $\CX_{\eps}= \{X\in \CX \colon \dist(X,\partial \CX)>\eps\}$. For vector or matrix--valued functions  the convolution is defined component--wise.
For $\CK \subseteq \BR^k$ and $\delta>0$ we also use the notation 
\begin{equation*}
    \CK^{\delta}=\{X\in \BR^k \colon \dist(X,\CK)<\delta\}=\cup_{X\in \CK}B(X,\delta).
\end{equation*}
One easily shows that  for $f\in B^{\alpha}_{q,\infty}(\CX)$ the following estimates hold
    \begin{align}
     \label{eq:mollif_nabla}
        \|D_X [f]_{\eps}\|_{L^q(\CX_{\eps})}&\leq C \|f\|_{B^{\alpha}_{q,\infty}(\CX)}\eps^{\alpha-1},
    \\
     \label{eq:mollif_diff}
        \|[f]_{\eps}-f\|_{L^q(\CX_{\eps})}&\leq C\|f\|_{B^{\alpha}_{q,\infty}(\CX) }\eps^{\alpha},
    \\
     \label{eq:transport}
        \|f(\cdot-y)-f(\cdot)\|_{L^q(\CX\cap(\CX+y))}&\leq C \|f\|_{B^{\alpha}_{q,\infty}(\CX)}|y|^{\alpha}
    \end{align}
where $C$ depends only on $\CX$.

\section{The proof of the main results}
\label{sec:main}
In what follows, we will denote by $C$ a constant 
independent of $\eps$.

\subsection{Commutator estimates}
\label{subsec:convex}
The essential part of  the proof of  Theorem \ref{thm:bes} pertains  the estimation of the  nonlinear commutator
\begin{equation*}
    [G(U)]_{\eps}-G([U]_{\eps}).
\end{equation*}
It is based on the following observation, which appears in a special form in \cite{Feireisletal}. The rest of the proof of Theorem \ref{thm:bes} is a reminescence of the paper of \cite{ConstETiti}.
\begin{lem}
    \label{lem:nonlin_commut}
     Let $\CO$ be a convex set, $U\in L^2_{loc}(\CX,\CO)$, $G\in C^2(\CO;\BR^n)$ and let
     \begin{equation}
      \label{eq:sec_der_bdd}
        \sup_{i,j \in{1,\dots,d}}\|\partial_{U_i}\partial_{U_j} G(U)\|_{L^{\infty}(\CO)}<+\infty.
    \end{equation}
    Then there exists $C>0$ depending only on $\eta_1$, second derivatives of $G$ and $k$ (dimension of $\CO$) such that 
    \begin{equation*}
        \left\|
            [G(U)]_{\eps}-G([U]_{\eps})
        \right\|_{L^{q}(K)}
        \leq C\Big(\|[U]_{\eps}-U\|^2_{L^{2q}(K)}
        +   \sup_{Y\in \supp \eta_{\eps}}\|U(\cdot)-U(\cdot-Y)\|^2_{L^{2q}(K)}
        \Big)
    \end{equation*}
    for $q\in [1,\infty)$, where $K\subseteq \CX$ satisfies $K^{\eps} \subseteq \CX$.
\end{lem}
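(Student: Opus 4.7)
The plan is to base everything on a second–order Taylor expansion of $G$ around the value $U(X)$, applied in two ways — at the points $U(X-Y)$ for $Y\in\supp\eta_\eps$ and at the point $[U]_\eps(X)$. This is the commutator trick of Constantin–E–Titi, phrased abstractly so that only the bound \eqref{eq:sec_der_bdd} on $D^2G$ enters. The convexity of $\CO$ plays a structural role: it guarantees that every point on the segment from $U(X)$ to $U(X-Y)$ (and from $U(X)$ to $[U]_\eps(X)$) lies in $\CO$, so that $\sup_\CO|D^2G|$ can actually be used to control the Taylor remainders; moreover $[U]_\eps(X)$ itself lies in $\CO$ as a convex combination of values of $U$.

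Concretely, I would write
\begin{equation*}
  G(U(X-Y)) = G(U(X)) + DG(U(X))\cdot(U(X-Y)-U(X)) + R_1(X,Y),
\end{equation*}
\begin{equation*}
  G([U]_\eps(X)) = G(U(X)) + DG(U(X))\cdot([U]_\eps(X)-U(X)) + R_2(X),
\end{equation*}
with $|R_1(X,Y)|\le C|U(X-Y)-U(X)|^2$ and $|R_2(X)|\le C|[U]_\eps(X)-U(X)|^2$, the constant depending only on $\|D^2G\|_{L^\infty(\CO)}$ and on the dimension. Multiplying the first identity by $\eta_\eps(Y)$, integrating in $Y$, and subtracting the second, the constant term $G(U(X))$ drops out and, crucially, the two linear terms also cancel because $\int\eta_\eps(Y)(U(X-Y)-U(X))\,dY = [U]_\eps(X)-U(X)$. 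What remains is the pointwise identity
\begin{equation*}
  [G(U)]_\eps(X) - G([U]_\eps(X)) = \int\eta_\eps(Y)\,R_1(X,Y)\,dY - R_2(X),
\end{equation*}
from which the claimed pointwise quadratic bound is immediate.

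Taking the $L^q(K)$ norm is then routine: the $R_2$ contribution gives $C\|[U]_\eps-U\|_{L^{2q}(K)}^2$ via the identity $\||f|^2\|_{L^q}=\|f\|_{L^{2q}}^2$, and the $R_1$ contribution is handled by Minkowski's integral inequality,
\begin{equation*}
  \Bigl\|\int\eta_\eps(Y)\,|U(\cdot-Y)-U(\cdot)|^2\,dY\Bigr\|_{L^q(K)} \le \int\eta_\eps(Y)\,\|U(\cdot-Y)-U(\cdot)\|_{L^{2q}(K)}^2\,dY,
\end{equation*}
which, since $\eta_\eps$ is a probability density, is bounded by $\sup_{Y\in\supp\eta_\eps}\|U(\cdot)-U(\cdot-Y)\|_{L^{2q}(K)}^2$. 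Summing the two contributions produces the stated inequality. The essentially only delicate point, and thus the main obstacle, is the quadratic control of the Taylor remainders; this is precisely where convexity of $\CO$ is indispensable, for otherwise the segment along which the integral remainder is computed need not remain in $\CO$ and the supremum hypothesis \eqref{eq:sec_der_bdd} could not be invoked. The assumption $K^\eps\subseteq\CX$ enters only to ensure that $U(X-Y)$ is defined for all $X\in K$ and $Y\in\supp\eta_\eps$.
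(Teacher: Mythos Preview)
Your proof is correct and follows essentially the same approach as the paper: both rest on second-order Taylor expansions of $G$ around $U(X)$, applied to $U(X-Y)$ and to $[U]_\eps(X)$, with the linear terms cancelling upon convolution in $Y$. The only cosmetic differences are that the paper writes the Taylor estimates directly as inequalities and invokes Jensen's inequality (once to pass the absolute value inside the convolution, once for the $L^q$ estimate), whereas you phrase the expansion as an equality with remainder and use Minkowski's integral inequality for the $L^q$ step; these are equivalent here since $\eta_\eps$ is a probability density.
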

\begin{proof}
    Without loss of generality, we assume that  $G$ is a scalar function and $U$ is  finite everywhere on $\CX$.
    Then, because of \eqref{eq:sec_der_bdd} 
    we get for $X$, $Y\in K$
    \begin{align}
        \label{eq:first_tay}
        \left|G(U(X))-G([U]_{\eps}(X))-D_U G\circ U(X)(U(X)-[U]_{\eps}(X))\right|
        &\leq 
        C |U(X)-[U]_{\eps}(X)|^2,
        \\
        \label{eq:sec_tay}
        \left|G(U(X))-G(U(Y))-D_U G\circ U(X)(U(X)-U(Y))\right|
        &\leq 
        C |U(X)-U(Y)|^2.      
    \end{align}
    We convolve  \eqref{eq:sec_tay} with $\eta_{\eps}$ in variable $Y$ and apply Jensen's inequality on the left--hand side
    \begin{equation}
        \label{eq:convol_tayl}
         \left|G(U(X))-[G(U)]_{\eps}(X)-D_U G\circ U(X)(U(X)-[U]_{\eps}(X))\right|
        \leq 
        C |U(X)-U(\cdot)|^2\ast_Y \eta_{\eps}.
    \end{equation}
    Finally, coupling \eqref{eq:first_tay} and \eqref{eq:convol_tayl} implies to
    \begin{equation}
        \label{eq:eq3}
        \left| 
            G([U]_{\eps}(X))-[G(U)]_{\eps}(X)
        \right|
        \leq
        C\left(|U(X)-[U]_{\eps}(X)|^2+|U(X)-U(\cdot)|^2\ast_Y \eta_{\eps}(X)\right).
    \end{equation}
    In order to complete the proof, we 
    use  Jensen's inequality to estimate the $L^q$ norm of the second 
    term on the right--hand side of \eqref{eq:eq3}
        \begin{align*}
        &\int_{K} \left| \int_{\supp \eta_{\eps}}|U(X)-U(X-Y)|^2\eta_{\eps}(Y)\de{Y}\right|^q\de{X}
        \\
        & \ \leq
        \int_{\supp \eta_{\eps}} \int_{K}|U(X)-U(X-Y)|^{2q}\eta_{\eps}(Y)\de{X}\de{Y}
        \leq
         \sup_{Y\in \supp \eta_{\eps}}\|U(\cdot)-U(\cdot-Y)\|^{2q}_{L^{2q}(K)}.
    \end{align*}   
\end{proof}

\subsection{Proof of Theorem \ref{thm:bes}}
    \label{subsec:proof1}
    Let $\eps_0>0$ and consider a test function
    $\psi\in C^{\infty}(\CX)$ such
    that $\supp \psi\subseteq \CX_{\eps_0}$. 
    Mollifying \eqref{eq:balance} 
    by $\eta_{\eps}$, we obtain 
    \begin{equation}
        \label{eq:moll_bal}
         \diver_{X} [G(U)]_{\eps}=0 \quad \mbox{in } \CX_{\eps_0}
    \end{equation}
    whenever $\eps<\eps_0$.
    We multiply  both sides of \eqref{eq:moll_bal} by $\psi {\mathcal B}([U]_{\eps})$ (where ${\mathcal B}$ comes from \eqref{eq:algebraic_cons}) from the left and get
    \begin{equation}
        \label{eq:prep}
         \int_{\CX}\psi(X){\mathcal B}([U]_{\eps}(X)) \diver_{X} ([G(U)]_{\eps}(X)) \de{X}=0.
    \end{equation}
    We can recast the previous equality as follows
    \begin{equation*}
        \int_{\CX}\psi(X){\mathcal B}([U]_{\eps}(X)) \diver_{X} G([U]_{\eps}(X)) \de{X}= \int_{\CX}R_{\eps}\de{X}
    \end{equation*}
    with the commutator
    \begin{equation}\label{error}
        R_{\eps}=\psi(X){\mathcal B}([U]_{\eps}(X))
        \diver_{X}
        \Big(
            G([U]_{\eps}(X))-[G(U)]_{\eps}(X)
        \Big).
    \end{equation} 
    Due to \eqref{eq:algebraic_cons}, equality \eqref{eq:prep} might be adjusted to the form
    \begin{equation}
        \label{eq:mollifRendef}
        -\int_{\CX} Q([U]_{\eps}(X)) (D_X\psi(X))^T \de{X}
        = 
        \int_{\CX}R_{\eps}\de{X}.
    \end{equation}
 In order to show that the right--hand side of \eqref{eq:mollifRendef} converges to zero as $\varepsilon\to0$, we write
    \begin{align}\nonumber
        \label{eq:commut_full}
        \int_{\CX}R_{\eps}(X)\de{X}&=\int_{\CX} 
       \Big(
            G([U]_{\eps})-[G(U)]_{\eps}
        \Big):\left((D_U {\mathcal B}^T)([U]_{\eps})D_X [U]_{\eps} \psi\right)\de{X}
        \\ 
        &+ 
        \int_{\CX}
        \Big(
            G([U]_{\eps})-[G(U)]_{\eps}
        \Big)\colon\left({\mathcal B}^T([U]_{\eps})D_{X}\psi\right)\de{X}
        \\\nonumber
        &=I^{1}_{\eps}+I^{2}_{\eps}.
    \end{align}
    The first integral is estimated using Lemma \ref{lem:nonlin_commut} and \eqref{eq:mollif_nabla} as follows
    \begin{align*}
        |I^2_{\eps}|&\leq
        C\|{\mathcal B}\|_{W^{1,\infty}(\CO)}\|D_X [U]_{\eps}\|_{L^3(\CX_{\eps_0})}\|[U]_{\eps}-U\|^2_{L^3(\CX_{\eps_0})}\|\psi\|_{W^{1,\infty}(\CX_{\eps_0})}
        \\
        &\leq C \eps^{\alpha-1}\eps^{2\alpha}
    \end{align*}
    Similarly, we have
    \begin{equation*}
        |I^1_{\eps}|\leq C \eps^{\alpha},
    \end{equation*}
    hence, 
    \begin{equation*}
        \int_{\CX}R_{\eps}\de{X}\to 0 \quad \mbox{as $\eps\to 0$}
    \end{equation*}
    as long as $\alpha>\frac{1}{3}$.
    
   The convergence of the left--hand side of \eqref{eq:mollifRendef} follows from  the Vitali theorem.  Indeed, 
   the equi-integrability of $Q([U]_{\eps})$ in $\CX_{\eps_0}$ is a consequence of that of $|[U]_{\eps}|^3$ and the growth conditions on $Q$.

\begin{rem}
    Having $\CO$ non--convex, we face the problem that $[U]_{\eps}$ does not have to belong to $\CO$. 
    The convexity was crucial to conduct the Taylor expansion argument in Lemma \ref{lem:nonlin_commut}. 
    However, we will see that a suitable extension of functions $G$, ${\mathcal B}$ and $Q$ does not alter the previous proof significantly. 
\end{rem}

\subsection{Proof of Theorem \ref{thm:non_convex}}
\label{subsec:proof2}
There exists $\delta>0$ depending only on $\CK$ and $\CO$ such that $\CK^{2\delta}\subseteq \CO$. Let $\tilde{G}\in C^2(\BR^n;\BM^{(k+1)\times n})$, $\tilde{{\mathcal B}}\in C^1(\BR^n;\BM^{1\times n})$ and $\tilde{Q}\in C(\BR^n,\BM^{1\times (k+1)})$ be compactly supported functions satisfying
$\tilde{G}=G$, $\tilde{{\mathcal B}}={\mathcal B}$ and $\tilde{Q}=Q$  on $\CK^{\delta}$. Such functions exist as there is a set $\CR$ with a smooth boundary satisfying $\CK^{\delta}\subseteq \CR \subseteq \CO$. Thus, relation~\eqref{eq:algebraic_cons} holds also for $G$, ${\mathcal B}$ and $Q$ on $\CK^{\delta}$.
    
Similarly to the proof of Theorem \ref{thm:bes}, for a function $\psi \in C^{\infty}(\CX)$ compactly supported in $\CX_{\eps_0}$, we obtain for $\varepsilon<\varepsilon_0$
\begin{equation*}
    \int_{\CX}\psi\tilde{{\mathcal B}}([U]_{\eps}) \diver_{X} [\tilde{G}(U)]_{\eps} \de{X}=0.
\end{equation*}
We can write the previous equality as follows
\begin{equation}
    \label{eq:molif}
     \int_{\CX}\psi\tilde{{\mathcal B}}([U]_{\eps}) \diver_{X} \tilde{G}([U]_{\eps}) \de{X}= \int_{\CX}\tilde{R}_{\eps}\de{X}
\end{equation}
with the commutator
\begin{equation}
    \label{eq:comm_tilda}
     \tilde{R}_{\eps}=\psi\tilde{{\mathcal B}}([U]_{\eps})
     \diver_{X}\Big(
        \tilde{G}([U]_{\eps})-[\tilde{G}(U)]_{\eps}
     \Big).
\end{equation} 
Analogously to Subsection \ref{subsec:proof1}, $\int_{\CX}\tilde{R}_{\eps}\de{X}$ vanishes as $\eps \to 0$ due to Lemma \ref{lem:nonlin_commut}; hence, we may turn our attention to the left--hand side of  \eqref{eq:molif}. We show that it converges to 
\begin{equation}
    -\int_{\CX}Q(U) (D_X\psi)^T \de{X}.
\end{equation}
To this end, we put 
\begin{equation*}
    \CG_{\eps}^\delta = \{X\in \CX \colon |U(X)-[U]_{\eps}(X)|<\delta\}
\end{equation*}
and since
$D_U \tilde{Q}_{j}([U]_{\eps})=
    \tilde{{\mathcal B}}([U]_{\eps})D_U \tilde{G}_{j}([U]_{\eps})$ on  $\CG_{\eps}^\delta$ we obtain 
\begin{align*}
    &\left|
        \int_{\CX}\psi\tilde{{\mathcal B}}([U]_{\eps}) \diver_{X} \tilde{G}([U]_{\eps}) \de{X}+\int_{\CX} Q(U)(D_X\psi)^T \de{X}
    \right|
    \\ & \quad
    \leq 
    \left|
        \int_{\CX \backslash\CG_{\eps}^\delta}
        \psi\tilde{{\mathcal B}}([U]_{\eps}) \diver_{X} \tilde{G}([U]_{\eps}) \de{X}
    \right|
    +
    \left|
        \int_{\CX \backslash\CG_{\eps}^\delta}
         Q(U)(D_X \psi)^T\de{X}
        \right|
        \\ & \quad
        +\left|\int_{\CG_{\eps}^\delta}(\tilde{Q}(U)-\tilde{Q}([U]_{\eps}))(D_X \psi)^T\de{X}
    \right|
    =
    I^1_{\eps}+I^2_{\eps}+I^3_{\eps}.
\end{align*}
   To estimate $I^1_{\eps}$, recall that $\tilde{G}$ and $\tilde{{\mathcal B}}$ are compactly supported, therefore
\begin{align*}
    I^1_{\eps}\leq     
    \int_{\CX \backslash\CG_{\eps}^\delta}
    \left|
    \psi\tilde{{\mathcal B}}([U]_{\eps}) D_U\tilde{G}([U]_{\eps}) D_{X} [U]_{\eps} \right|\de{X}
    \leq
    C\|\psi\|_{C^1} \int_{\CX \backslash\CG_{\eps}^\delta}|D_X [U]_{\eps}|\de{X}.
\end{align*}
By the means of H\"older's and Chebyshev's inequality, \eqref{eq:mollif_nabla} and \eqref{eq:mollif_diff} we observe that
\begin{align*}
    I^1_{\eps}&\leq 
    C\|\psi\|_{C^1} \|D_X [U]_{\eps}\|_{L^3(\CX_{\eps_0})} \left|
    \CX\backslash \CG_{\eps}^\delta 
    \right|^{\frac{2}{3}}
    =
    C\|\psi\|_{C^1} \|D_X [U]_{\eps}\|_{L^3(\CX_{\eps_0})}
    \left|\{
        |U-[U]_{\eps}|>\delta
        \}
    \right|^{\frac{2}{3}}
    \\&
    \leq \frac{ C\|\psi\|_{C^1}}{\delta^{2}}
    \|D_X [U]_{\eps}\|_{L^3(\CX_{\eps_0})}
    \|U-[U]_{\eps}\|^2_{L^3(\CX_{\eps_0})}\leq \frac{ C\|\psi\|_{C^1}}{\delta^{2}} \eps^{3\alpha -1}.
\end{align*}
The integral $I^2_{\eps}$ vanishes, as $\|Q(U)\|_{L^{\infty}(\CX)}<\infty$.
Finally, we observe that
\begin{align*}
    I^3_{\eps}\leq \|\psi\|_{C^1}
    \int_{\CX_{\eps_0}}|\tilde{Q}(U)-\tilde{Q}([U]_{\eps})|\de{X}.
\end{align*}
Therefore, $I^3_{\eps}\to 0$ due to the almost everywhere convergence of $\tilde{Q}(U)-\tilde{Q}([U]_{\eps})$ to zero and boundedness of $\tilde{Q}$. 

\section{Applications}
\label{sec:appl}
Observe that we have considered so far genuinely nonlinear fluxes $G$. 
The key part of the proof was to estimate
    \begin{equation}
       \int_{\CX} 
       \Big(
            G([U]_{\eps})-[G(U)]_{\eps}
        \Big):\left((D_U {\mathcal B}^T)([U]_{\eps})D_X [U]_{\eps} \psi\right)\de{X},
            \end{equation}
where the integral vanishes whenever $G$ is an affine. Using this observation we might expect to drop some conditions on $U$ in the main theorems if some components of $G$ are affine functions.

We present three extensions of Theorem \ref{thm:bes}, which follow directly from the previous observation. The first gives a sufficient condition to drop the Besov regularity with respect to some variables. It is connected with the columns of $G$.
\begin{cor}
    \label{cor1}
    Let $G=(G_{1},\dots,G_{s},G_{ s+1},\dots G_{ k})$ where $G_{ 1},\dots,G_{ s}$ are affine vector--valued functions and $\CX = \CY\times \CZ$ where $\CY\subseteq \BR^s$ and $\CZ\subseteq \BR^{k+1-s}$. Then it is enough  to assume that $U\in L^3(\CY;B^{\alpha}_{3,\infty}(\CZ))$ in Theorem \ref{thm:bes}.
\end{cor}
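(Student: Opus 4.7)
My plan is to rerun the proof of Theorem \ref{thm:bes} with a mollifier acting only in the $\CZ$-coordinates. Denote this partial convolution by $[\cdot]_\eps^z$. Because $G_1,\dots,G_s$ are affine, convolution commutes with them, so the nonlinear commutator $G([U]_\eps^z)-[G(U)]_\eps^z$ has its first $s$ columns identically zero. After integrating by parts as in Subsection \ref{subsec:proof1}, this commutator is contracted either with $(D_U\mathcal{B}^T)([U]_\eps^z)D_X[U]_\eps^z\psi$ or with $\mathcal{B}^T([U]_\eps^z)D_X\psi$; the vanishing of the columns $j\leq s$ removes all $y$-derivatives of $[U]_\eps^z$ from the first pairing, leaving only $z$-derivatives of $[U]_\eps^z$ and of $\psi$. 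These are classically well defined and are controllable by Besov regularity in the $z$-variable alone.

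Next I would apply Lemma \ref{lem:nonlin_commut} slicewise in $y$ with exponent $q=3/2$. For a.e.\ $y\in\CY$ and any compact $K_z\subseteq\CZ$ with $K_z^\eps\subseteq\CZ$, the lemma yields
\begin{equation*}
\bigl\|G([U(y,\cdot)]_\eps^z)-[G(U(y,\cdot))]_\eps^z\bigr\|_{L^{3/2}(K_z)}\leq C\Bigl(\|[U(y,\cdot)]_\eps^z-U(y,\cdot)\|_{L^3(K_z)}^2+\sup_{Y\in\supp\eta_\eps}\|U(y,\cdot)-U(y,\cdot-Y)\|_{L^3(K_z)}^2\Bigr).
\end{equation*}
Raising to the $3/2$-th power, integrating in $y$ by Fubini, and invoking the $z$-versions of \eqref{eq:mollif_diff}--\eqref{eq:transport} together with the hypothesis $U\in L^3(\CY;B^{\alpha}_{3,\infty}(\CZ))$ yields $\|G([U]_\eps^z)-[G(U)]_\eps^z\|_{L^{3/2}(K_y\times K_z)}\leq C\eps^{2\alpha}$. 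A parallel computation based on \eqref{eq:mollif_nabla} gives $\|D_z[U]_\eps^z\|_{L^3(K_y\times K_z)}\leq C\eps^{\alpha-1}$.

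Feeding these bounds into the H\"older estimates of $I^1_\eps$ and $I^2_\eps$ from Subsection \ref{subsec:proof1} reproduces the same orders $C\eps^{3\alpha-1}$ and $C\eps^{2\alpha}$, both vanishing whenever $\alpha>\tfrac13$. Convergence of the left-hand side of the mollified identity to $-\int_\CX Q(U)(D_X\psi)^T\,dX$ proceeds exactly as in Subsection \ref{subsec:proof1} via Vitali's theorem: pointwise convergence is automatic from $[U]_\eps^z\to U$ a.e., and equi-integrability of $Q([U]_\eps^z)$ follows from the cubic growth assumption on $Q$, the embedding $L^3(\CY;B^{\alpha}_{3,\infty}(\CZ))\hookrightarrow L^3(\CX)$, and Jensen's inequality $|[U]_\eps^z|^3\leq[|U|^3]_\eps^z$.

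The main point requiring care is the structural bookkeeping: the matrix product $(D_U\mathcal{B}^T)([U]_\eps^z)D_X[U]_\eps^z$ a priori contains $y$-derivatives of $[U]_\eps^z$, which are not controlled by the available regularity, so the vanishing of the first $s$ columns of the commutator must be used to cancel those contributions before any norm is taken. Once this cancellation is made precise, the remainder of the argument is a direct transcription of Subsection \ref{subsec:proof1} with the partial mollifier replacing the full-space one, and the remaining hypotheses of Theorem \ref{thm:bes} carry over without change.
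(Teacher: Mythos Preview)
Your approach---mollify only in the $\CZ$-variables and use that the affine columns of $G$ commute with convolution so the commutator lives entirely in the $z$-block---is exactly what the paper has in mind; the paper itself gives no detailed proof beyond the observation preceding the corollary, and your write-up is a faithful fleshing-out of it.

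There is, however, one step you pass over too quickly. You correctly note that the $y$-derivatives of $[U]_\eps^z$ drop out of the \emph{commutator} integrals $I^1_\eps$, $I^2_\eps$. But the passage from \eqref{eq:prep} to \eqref{eq:mollifRendef} is a separate issue: it relies on the pointwise chain-rule identity $\mathcal{B}(V)\,\diver_X G(V)=\diver_X Q(V)$ with $V=[U]_\eps$, followed by integration by parts. With $V=[U]_\eps^z$ this identity is problematic in the $y$-block, since $[U]_\eps^z$ has no $y$-regularity at all and $\diver_y G_y([U]_\eps^z)$, $\diver_y Q_y([U]_\eps^z)$ are a priori only distributions. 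Neither the equation nor the vanishing of the commutator in the $y$-columns immediately gives you $\int_{\CX}\psi\,\mathcal{B}([U]_\eps^z)\,\diver_y G_y([U]_\eps^z)\,dX=-\int_{\CX}Q_y([U]_\eps^z)\,D_y\psi\,dX$.

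The fix is a secondary mollification in $y$: set $V_\delta=[[U]_\eps^z]_\delta^y$, for which the chain-rule identity and \eqref{eq:mollifRendef} hold classically, and let $\delta\to 0$ at fixed $\eps$. The affinity of $G_y$ is used once more here, since $\diver_y G_y(V_\delta)=[\diver_y G_y([U]_\eps^z)]_\delta^y$, and the equation $\diver_X[G(U)]_\eps^z=0$ shows $\diver_y G_y([U]_\eps^z)=-\diver_z[G_z(U)]_\eps^z\in L^{3/2}_{loc}$, so the $y$-mollification converges in $L^{3/2}$; the remaining limits go through by the same Vitali argument you already invoke. Once you insert this auxiliary step, the rest of your argument is complete.
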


Next, we specify when we can omit  the Besov regularity with respect to some components of $U$.
\begin{cor}
    \label{cor2}
    Assume that $U=(V_1,V_2)$ where $V_1=(U_1,...,U_s)$ and $V_2=(U_{s+1},\dots,U_n)$. If ${\mathcal B}$ does not depend on $V_1$ and $G=G(V_1,V_2)=G_1(V_1)+G_2(V_2)$ and $G_1$ is linear then it is enough to assume $U_1,\dots,U_s \in L^3(\CX)$ in Theorem \ref{thm:bes}. 
\end{cor}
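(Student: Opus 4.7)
\medskip

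\noindent\textbf{Proof proposal for Corollary \ref{cor2}.}
My plan is to revisit the argument in Subsection~\ref{subsec:proof1} and observe that the structural hypotheses on $G$ and ${\mathcal B}$ make the contribution of $V_1$ disappear from every estimate that required Besov regularity. Only $V_2$ needs to sit in $B^{\alpha}_{3,\infty}$, while for $V_1$ the bare assumption $V_1\in L^3(\CX)$ should suffice both for the commutator analysis and for passing to the limit on the left--hand side.

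The starting point is that convolution commutes with any linear (more precisely affine) map, so
\[
  [G_1(V_1)]_{\eps}=G_1([V_1]_{\eps}),
\]
and therefore the nonlinear commutator reduces to
\[
  G([U]_{\eps})-[G(U)]_{\eps}
  =G_2([V_2]_{\eps})-[G_2(V_2)]_{\eps}.
\]
In particular, Lemma \ref{lem:nonlin_commut} can be applied to $G_2$ and $V_2$ alone: the convexity of $\CO$ passes to the projection onto the $V_2$ variables, and the $L^3$ norms appearing in the lemma can be estimated through \eqref{eq:mollif_diff} and \eqref{eq:transport} applied only to $V_2$.

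The next step is to revisit the splitting \eqref{eq:commut_full} of $\int_{\CX} R_{\eps}\,\mathrm{d}X$ into $I^{1}_{\eps}+I^{2}_{\eps}$. Because ${\mathcal B}$ does not depend on $V_1$, we have $(D_U{\mathcal B}^T)([U]_{\eps})=(D_{V_2}{\mathcal B}^T)([V_2]_{\eps})$ contracted only against $D_X[V_2]_{\eps}$; the $D_X[V_1]_{\eps}$ block of $D_X[U]_{\eps}$ is annihilated. Consequently
\[
  I^{1}_{\eps}
  =\int_{\CX}\Bigl(G_2([V_2]_{\eps})-[G_2(V_2)]_{\eps}\Bigr)
      \colon\bigl((D_{V_2}{\mathcal B}^T)([V_2]_{\eps})\,D_X[V_2]_{\eps}\,\psi\bigr)\,\mathrm{d}X,
\]
and similarly $I^{2}_{\eps}$ only involves $V_2$ through both factors. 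Applying Lemma \ref{lem:nonlin_commut} to $G_2$ and using \eqref{eq:mollif_nabla}--\eqref{eq:transport} for $V_2$ yields $|I^1_\eps|\leq C\eps^{3\alpha-1}$ and $|I^2_\eps|\leq C\eps^{2\alpha}$, which vanish as $\eps\to 0$ under the assumption $\alpha>1/3$ exactly as in the proof of Theorem \ref{thm:bes}.

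It remains to pass to the limit on the left--hand side
\[
  -\int_{\CX}Q([U]_{\eps})(D_X\psi)^T\,\mathrm{d}X
  \;\longrightarrow\;
  -\int_{\CX}Q(U)(D_X\psi)^T\,\mathrm{d}X,
\]
which again follows from Vitali's theorem. Pointwise a.e.\ convergence of $[V_1]_{\eps}\to V_1$ (since $V_1\in L^3\subset L^1_{\mathrm{loc}}$) together with $[V_2]_{\eps}\to V_2$ gives $Q([U]_{\eps})\to Q(U)$ a.e.\ by continuity of $Q$. Equi--integrability of $Q([U]_{\eps})$ on compact subsets of $\CX_{\eps_0}$ follows from the growth bound $|Q(V)|\leq C(1+|V|^3)$ and the uniform $L^3$ bound on $[U]_{\eps}$ guaranteed by $V_1,V_2\in L^3(\CX)$.

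The step I expect to demand the most care is the verification that the $V_1$--dependence drops out of the commutator terms $I^1_\eps$ and $I^2_\eps$: one must be attentive to the fact that the absence of Besov control on $V_1$ is tolerable precisely because (i) the linearity of $G_1$ eliminates $V_1$ from the commutator $G([U]_{\eps})-[G(U)]_{\eps}$, and (ii) the $V_1$--independence of ${\mathcal B}$ prevents $D_X[V_1]_{\eps}$ from entering the $I^1_\eps$ factor. Once this cancellation is recorded, the remaining estimates are identical to those of Subsection~\ref{subsec:proof1}.
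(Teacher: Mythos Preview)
Your proposal is correct and follows exactly the approach the paper intends: the paper does not give a detailed proof of Corollary~\ref{cor2} but simply states that it ``follows directly from the previous observation'' that the critical integral \eqref{eq:commut_full} involving $D_U{\mathcal B}^T$ and the commutator $G([U]_\eps)-[G(U)]_\eps$ vanishes when the relevant part of $G$ is affine. You have fleshed out precisely this mechanism---the linearity of $G_1$ kills $V_1$ from the commutator, and the $V_1$--independence of ${\mathcal B}$ kills $D_X[V_1]_\eps$ from the remaining factor---and the rest of your argument (Lemma~\ref{lem:nonlin_commut} for $G_2,V_2$, Vitali for the left--hand side) is identical to Subsection~\ref{subsec:proof1}.
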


Finally, we deal with the case when some components of ${\mathcal B}$ are not Lipschitz on $\CO$, but appropriate rows of $G$ are affine functions.

\begin{cor}
    \label{cor3}
    Assume that a $j$--th row of $G$ is an affine function. Then the statement of Theorem \ref{thm:bes} holds even if we assume that ${\mathcal B}_{j}$ is only locally Lipschitz in $\CO$.
\end{cor}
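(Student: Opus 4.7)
The plan is to revisit the proof of Theorem \ref{thm:bes} in Subsection \ref{subsec:proof1} and exploit the observation already highlighted at the start of Section \ref{sec:appl}: whenever a single row of $G$ is affine, the corresponding row of the nonlinear commutator tensor vanishes identically. Denoting by $G^{(i)} = (G_{i,0}, \dots, G_{i,k})$ the $i$-th row of $G$, the commutator \eqref{error} expands as
\[
  R_\eps = \psi(X)\sum_{i=1}^n {\mathcal B}_i([U]_\eps(X))\,\diver_X\Bigl(G^{(i)}([U]_\eps(X)) - [G^{(i)}(U)]_\eps(X)\Bigr).
\]
The affineness of $G^{(j)}$ combined with linearity of convolution yields $G^{(j)}([U]_\eps) = [G^{(j)}(U)]_\eps$ pointwise, so the coefficient ${\mathcal B}_j([U]_\eps)$ is multiplied by zero and drops out of $R_\eps$ entirely.

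With ${\mathcal B}_j$ eliminated from the commutator, the splitting $\int_\CX R_\eps\,\de{X} = I^1_\eps + I^2_\eps$ of Subsection \ref{subsec:proof1} involves only ${\mathcal B}_i$ and $D_U{\mathcal B}_i$ for $i \neq j$, each of which remains globally bounded by hypothesis. The estimates $|I^1_\eps| \leq C\eps^\alpha$ and $|I^2_\eps| \leq C\eps^{3\alpha-1}$ carry over verbatim, so $\int_\CX R_\eps\,\de{X} \to 0$ as $\eps \to 0$ whenever $\alpha > 1/3$.

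Next I would verify that the chain-rule rewriting $\psi{\mathcal B}([U]_\eps)\,\diver_X G([U]_\eps) = \psi\,\partial_{x_l}Q_l([U]_\eps)$ leading to \eqref{eq:mollifRendef} is still valid. This reduces to checking $Q \in C^1(\CO)$: by \eqref{eq:algebraic_cons} we have $D_U Q_l = \sum_i {\mathcal B}_i\,D_U G^{(i)}_l$, and this expression is continuous on $\CO$ because ${\mathcal B}_i \in W^{1,\infty}$ for $i \neq j$, ${\mathcal B}_j$ is continuous (being locally Lipschitz), and $D_U G \in C^1$. The Vitali-type passage $Q([U]_\eps) \to Q(U)$ in $L^1_{\mathrm{loc}}$ then invokes only the continuity of $Q$ and the cubic growth bound, both unchanged from Theorem \ref{thm:bes}.

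The hard part, conceptually, is the worry that ${\mathcal B}_j([U]_\eps)$ may itself be unbounded when $[U]_\eps$ drifts into regions where the locally Lipschitz ${\mathcal B}_j$ grows without control; this would be fatal for any direct commutator bound. The safeguard is precisely the row-wise cancellation identified above: ${\mathcal B}_j$ never enters a quantitative estimate because it multiplies the identically zero tensor $G^{(j)}([U]_\eps) - [G^{(j)}(U)]_\eps$, and its only remaining role is inside the continuous function $Q$, for which continuity alone is what the argument consumes.
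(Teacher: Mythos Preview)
Your proposal is correct and follows precisely the approach indicated by the paper. The paper does not spell out a proof of Corollary \ref{cor3} beyond the remark at the beginning of Section \ref{sec:appl} that the key commutator integral vanishes when the relevant part of $G$ is affine; your argument is exactly the detailed unpacking of that remark, noting that the $j$-th row of $G([U]_\eps)-[G(U)]_\eps$ vanishes identically so that ${\mathcal B}_j$ disappears from $R_\eps$, while its remaining appearance through $Q$ requires only continuity.
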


In the rest of this paper, we present a few examples on which the general theory applies. Some of them show how the general framework allows to recover some known results.  In what follows, we consider $\CX = (0,T)\times \BT^3$, $X=(t,x)$ and $\alpha>\frac{1}{3}$. We also present the systems in their standard form denoting $\nabla_x$ and $\diver_x$ the correspondent operators with respect to the spatial coordinate $x$.

\subsection{Incompressible Euler system}
\label{subsec:incom_eul}
Let us consider the system of equations
\begin{align*}
    \left.\begin{aligned}\diver_{x}\bu^T &= 0\\ 
    \partial_t \bu + (\bu\cdot \nabla_x) \bu + \nabla_x p &= 0
    \end{aligned} 
    \right\}
    \quad \mbox{in } \CX
\end{align*}
for an unknown vector field $\bu\colon (0,T)\times \BT^3 \to \BR^3$
and scalar  $p\colon (0,T)\times \BT^3 \to \BR$.
The system can be rewritten into the divergence form  with respect to $X=(t,x)$
\begin{align}
 \label{eq:eul_div_form}
    \left.
    \begin{aligned}
        \diver_{x}\bu^T &= 0, \\
            \partial_t \bu +\diver_x(\bu\otimes\bu +  p\BI) &= 0.
    \end{aligned}
    \right\}
\end{align}
By multiplying \eqref{eq:eul_div_form} with ${\mathcal B}(p,\bu)= (p-1\slash 2 |\bu|^2,\bu^T)$ we obtain the conservation law for the  energy
\begin{equation}
 \label{eq:inc_eul}
    \partial_t \left(\frac{1}{2}|\bu|^2\right)
    + \diver_x \left(\frac{1}{2}|\bu|^2 + p\bu^T \right)=0.
\end{equation}
    
Corollaries \ref{cor1}, \ref{cor2} and \ref{cor3} imply that any weak solution $(p,\bu)\in  L^3(\CX)\times L^3(0,T;B^{\alpha}_{3,\infty}(\BT^3))$ is a weak solution to \eqref{eq:inc_eul}.
    
\begin{rem}
    This result is comparable to \cite{ConstETiti}.
\end{rem}
    
\subsection{Compressible Euler system}
    \label{subsec:comp_eul}
We consider the compressible Euler equations in the following form
\begin{align}
 \label{eq:comp_eul}
    \left.
    \begin{aligned}
        \partial_t \rho+ \diver_{x}(\rho\bu^T) &= 0 \\ 
        \partial_t \bu + \diver_x(\bu\otimes \bu) + \frac{\nabla_x p(\rho)}{\rho} &= 0
    \end{aligned} \right\}
    \quad \mbox{in } \CX
\end{align}
for an unknown vector field $\bu\colon \CX \to \BR^3$
and scalar  $\rho\colon \CX \to \BR$. The function $p\colon [0,\infty)\to\BR$ is given.  Let $P$ be a primitive function to $\frac{p(\rho)}{\rho}$ such that $P(1)=0$. Then the system can be rewritten into the divergence form
\begin{align}
 \label{eq:comp_eul_div}
    \left.
    \begin{aligned}
        \partial_t \rho+ \diver_{x}(\rho\bu^T) &= 0,\\ 
        \partial_t \bu + \diver_x\left(\bu\otimes \bu+P(\rho)\BI\right)  &= 0.         
    \end{aligned}
    \right.
\end{align}
To get the conservation of the energy, we multiply \eqref{eq:comp_eul_div} with
\begin{equation*}
    {\mathcal B}(\rho,\bu)=\left( P(\rho)+ \rho P'(\rho)-\frac{1}{2}|\bu|^2, \rho \bu^T\right)
\end{equation*}
and obtain
\begin{equation}
 \label{eq:comp_cons}
    \partial_t\left(
        \frac{1}{2}\rho |\bu|^2 + \rho P(\rho)
              \right)
    +\diver_x\left[
        \left(
            \frac{1}{2}\rho |\bu|^2
            +\rho P(\rho)+p(\rho)
        \right)
    \bu^T\right]
    =0
\end{equation}

Let $(\rho,\bu)\in L^3(0,T;B^{\alpha}_{3, \infty}(\BT^3))\times L^3(0,T;B^{\alpha}_{3, \infty}(\BT^3;\BR^3))$ be a weak solution to \eqref{eq:comp_eul_div} such that $\rho\in [\ubar{\rho},\bar{\rho}]$ for some $0<\ubar{\rho}<\bar{\rho}<\infty$ and $\bu\in B(0,R)$ for some $R>0$. Moreover, if $p\in C^2([\ubar{\rho},\bar{\rho}])$, we use\footnote{We can extend $p$ from $[\ubar{\rho},\bar{\rho}]$ on $\BR$ such that the extended function will be of class $C^2$ and compactly supported in $\BR$. Moreover, due to the boundedness of $|\bu|$ we can write $|\bu|^2 = \bu\cdot T(\bu)$ in $\CX$ where $T$ is a bounded Lipschitz function on $\BR^3$.} Corollary \ref{cor1} to show that $(\rho,\bu)$ is a weak solution to \eqref{eq:comp_cons}. In the contrast with the incompressible case, the continuity equation (the first equation of \eqref{eq:comp_eul}) is not linear with respect to $\rho$ and $\bu$. Therefore, we have to assume that $\bu$ is bounded to provide ${\mathcal B}(\rho,\bu)$ is Lip
 schitz o
 n the range of $(
 \rho,\bu)$.
    
\begin{rem}
   We have considered the formulation of the compressible Euler system with the time derivative over a linear function of $(\rho, \bu)$. This has lead to a slightly different sufficient condition in comparison to \cite{Feireisletal}.
\end{rem}
    
\begin{rem}
    If $\rho>0$, system \eqref{eq:comp_eul} can be rewritten with respect to the quantities $\rho$ and $\bm = \rho \bu$ as follows
    \begin{align}
     \label{eq:comp_eul_div_2}
    \left.
    \begin{aligned}
        \partial_t \rho+ \diver_{x}(\bm) &= 0\\ 
        \partial_t \bm + \diver_x\left(\frac{\bm\otimes \bm}{\rho}+p(\rho)\BI\right)  &= 0           
    \end{aligned}
    \right\}\quad \mbox{in } \CX.
\end{align}
A suitable choice of  ${\mathcal B}$ is then \begin{equation}
    {\mathcal B}(\rho,\bm) = \left(P(\rho)+ \rho P'(\rho)-\frac{|\bm|^2}{2\rho^2},\frac{\bm^T}{\rho}\right),
\end{equation}
which leads to the companion law
\begin{equation}
    \label{eq:comp_cons_2}
    \partial_t\left(
        \frac{|\bm|^2}{2\rho}  + \rho P(\rho)
              \right)
    +\diver_x\left[
        \left(
            \frac{|\bm|^2}{2\rho}
            +\rho P(\rho)+p(\rho)
        \right)
    \bu\right]
    =0.
\end{equation}
As the continuity equation is now linear with respect to $(\rho,\bm)$, we can apply Corollaries \ref{cor1} and \ref{cor3}. As their consequence, 
a weak solution 
\[(\rho,\bm)\in L^3(0,T;B^{\alpha}_{3, \infty}(\BT^3))\times L^3(0,T;B^{\alpha}_{3, \infty}(\BT^3;\BR^3))
\]
such that $\rho\in [\ubar{\rho},\bar{\rho}]$ for some $0<\ubar{\rho}<\bar{\rho}<\infty$
is also a weak solution to \eqref{eq:comp_cons_2}.

\end{rem}
\subsection{Polyconvex elasticity}
 \label{subsec:polyconv}
Let us consider the evolution equations of nonlinear elasticity, see e.g.\,\cite{Dafermos1985} or \cite{Demoulini2001},
\begin{align}
    \label{eq:non_elast}
    \left.
    \begin{aligned}
        \partial_t F &= \nabla_x \bv \\
        \partial_t \bv &= \diver_x \left(D_{F}W(F)\right) 
    \end{aligned}
    \right\}
    \quad \mbox{in } \CX,
\end{align}
for an unknown matrix field $F\colon \CX \to \BM^{k\times k}$, 
and an unknown vector field $\bv\colon \CX \to\BR^k$. Function 
$W\colon \CU \to \BR$ is given.
For many applications, $\CU= \BM_+^{k\times k}$ 
where $\BM_+^{k\times k}$ denotes the subset of $\BM^{k\times k}$ containing only matrices having positive determinant, see e.g.\,\cite{ball_open_prob} for the discussion on the form of $W$ and $\CU$. 
Let us point out that $\BM_+^{k\times k}$ is a non--convex connected set. 
    
System \eqref{eq:non_elast} can be rewritten into the divergence form in $(t,x)$ as follows
\begin{align}
 \label{eq:polyconv_div}
    \left.
    \begin{aligned}
        \partial_t F_{i,j} &= \partial_{x_i}u_j      =\diver_{x}\left(\left(\mathbf{e}^i\right)^T u_j\right),
        \quad \mathbf{e}^i_j=\delta_{i,j},
        \\
        \partial_t \bv &= \diver_x \left(D_{F}W(F)\right)^T.
    \end{aligned}
  \right.
\end{align}
By considering $F$ to have values in $\BR^{k^2}$ and taking ${\mathcal B}(F,\bv)=(\{D_F W(F)\}^T,\bv^T)$, we obtain the companion law
\begin{equation}
 \label{eq:energy_polyc}
    \partial_t \left(\frac{1}{2}|\mathbf{v}|^2+W(F)\right)
    -\diver\left( D_{F}W(F)\mathbf{v} \right)=0.
\end{equation}
    
Let $(F,\bv)\in B^{\alpha}_{3,\infty}(\CX;\BM^{k\times k})\times B^{\alpha}_{3,\infty}(\CX;\BR^3)$ be a weak solution to \eqref{eq:polyconv_div} such that $F$ has a compact range in $\CU$ and $\bv$ in $\BR^k$. Directly from Theorem \ref{thm:non_convex}, $(F,\bv)$ is a weak solution to \eqref{eq:energy_polyc} whenever $W\in C^3(\CU)$.

Note that this observation for polyconvex elasticity is up to our best knowledge an original contribution. 

\subsection{Magnetohydrodynamics}
\label{subsec:MHD}
Let us consider the system
\begin{align}
    \label{eq:MHD}
    \left.
    \begin{aligned}
    \diver_{x}\bu^T &= 0
            \\
            \diver_{x}\bh^T &= 0 
            \\
            \partial_t \bu + (\bu\cdot \nabla_x) \bu + \nabla_x p &= (\curl_x \bh)\times\bh  
            \\
            \partial_t \bh + \curl_x(\bh \times \bu)  &= 0 
    \end{aligned}
   \right\}     
   \quad &\mbox{in } \CX
\end{align} 
for unknown vector functions $\bu\colon \CX \to \BR^3$ and $\bh\colon \CX \to \BR^3$ and an unknown scalar function $p\colon \CX\to \BR$. It describes the motion of an ideal electrically conducting fluid, see e.g. \cite[Chapter VIII]{landau}.
Using standard vector calculus identities, \eqref{eq:MHD} can be written in the divergence form as follows:
\begin{align*}
    \diver_{x}\bu^T &= 0,\\
    \diver_{x}\bh^T &= 0,\\
    \partial_t \bu + \diver_x\left(\bu\otimes \bu + p\BI +\frac{1}{2}|\bh|^2\BI-\bh\otimes \bh\right)&= 0,\\
    \partial_t \bh + \diver_x(\bh \otimes \bu-\bu\otimes \bh)  &= 0.
\end{align*}
        
With ${\mathcal B}(p,\bu,\bh)=(p-1\slash 2 |\bu|^2,-\bh\cdot\bu,\bu^T,\bh^T)$, the conservation of the total energy reads:    
\begin{align}
    \label{eq:MHD_en}
    &\partial_t\left(
            \frac{1}{2}|\mathbf{u}|^2 + \frac{1}{2}|\bh|^2
    \right)+\diver_x\left[
    \left(\frac{1}{2} |\mathbf{u}|^2+p+|\bh|^2\right) \mathbf{u}^T -(\bu\cdot\bh)\bh^T 
           \right]=0.
\end{align}
    A combination of Corollaries \ref{cor1}, \ref{cor2} and \ref{cor3} implies that any weak solution 
\begin{equation}
    (p,\bu,\bh)\in L^3(\CX)\times\Big(L^3(0,T;B^{\alpha}_{3,\infty}(\BT^3))\Big)^2
\end{equation} 
is a weak solution to \eqref{eq:inc_eul}. A similar result was obtained e.g. in \cite{Cafetal}.
    
 \subsection{Further examples}  
   The list of examples is still far from being complete, however it is not our goal, and surely not an expectation of a reader, to provide an extended list. Among numerous further examples we will only mention inviscid compressible magneto--hydrodynamics. A direct combination of Subsection \ref{subsec:comp_eul} and \ref{subsec:MHD} gives  a sufficient condition to satisfy the relevant energy equality.
Another worth of mentioning example is heat conducting gas, see also \cite{Drivas:2017aa}.

\medskip

\noindent
{\bf Acknowledgements}
This work was partially supported by the Simons - Foundation grant 346300 and the Polish Government MNiSW 2015-2019 matching fund.  P.G. and A. \'S.-G. received support from the National Science Centre (Poland), 
2015/18/MST1/00075.
The research of M. M. leading to these results has received funding from the European Research Council under the European Union's Seventh Framework Programme (FP7/2007-2013)/ ERC Grant Agreement 320078. The Institute of Mathematics of the Academy of Sciences of the Czech Republic is supported by RVO:67985840.

\bibliographystyle{abbrv}    
\bibliography{reference}

\end{document}